\theoremstyle{definition}
\newtheorem{definition}{Definition}[section]
\theoremstyle{remark}
\theoremstyle{plain}
\newtheorem{lemma}[definition]{Lemma}
\newtheorem{proposition}[definition]{Proposition}
\newtheorem{corollary}[definition]{Corollary}
\newtheorem{theorem}[definition]{Theorem}
\title{A Method for Solving quadratic Equations
in Real Quaternion Algebra by using Scilab software}
\author{
Geanina ZAHARIA
\\  Diana-Rodica MUNTEANU }
\begin{document}
\maketitle
\begin{abstract}

	In this paper, we present some numerical applications for the equation $x^2+ax+b=0$, where $a, b$ are two quaternionic elements in $\mathbb{H}(\alpha,\beta)$.
	$\mathbb{H}(\alpha,\beta)$ represents the algebra of real quaternions with parameterized  coefficients by $\alpha$ and $\beta$. The algebra of real quaternions is an extension of complex numbers and is represented by algebraic objects called quaternions. These quaternions are composed of four components: a real part and three imaginary components. In general, $\mathbb{H}(\alpha,\beta)$ indicates a family of parameterized quaternion algebras, in which the specific values of $\alpha$ and $\beta$ determine the specific properties and structure of the quaternion algebra.
	Based on well-known solving methods, we have developed a new numerical algorithm that solves the equation for any quaternions a and b in any algebra $\mathbb{H}(\alpha,\beta)$.

\end{abstract}

Keywords: Quaternion, Quadratic formula, Solving polynomial equation\newline\\
MSC: 17A35; 17A45; 15A18

\section{Introduction}

	Quaternions are a number system first introduced in 1843 by Irish mathematician Sir William Rowan Hamilton. Hamilton was seeking a way to extend the complex numbers to three dimensions and realized that he could do so by adding an additional imaginary unit. 

	Quaternions are different from complex numbers in that they are non-commu- tative. Quaternions have found many practical applications in fields such as computer graphics, physics, and engineering. For instance, they are used in computer graphics to represent 3D rotations and orientations, and in aerospace engineering to model spacecraft altitude and control systems.

	Quaternions are essential in control systems for guiding aircraft and rockets: each quaternion has an axis indicating the direction and a magnitude determining the size of the rotation. Instead of representing an orientation change through three separate rotations, quaternions use a single rotation to achieve the same transformation.

Despite their usefulness, quaternions are not as widely used as complex numbers, largely due to their non-commutative nature. However, they remain an important topic in mathematics and physics, and continue to be studied and applied in various fields to this day. ([AM; 09], [FS; 15], [FSH; 15], [HS; 02], [PR; 97])

	We will numerically solve the monic quadratic equation with quaternion coefficients in the algebra $\mathbb{H}(\alpha,\beta)$ using Scilab, a free and open-source software for numerical computation.

	We chose to use the Scilab software to numerically solve the monic quadratic equations with quaternionic coefficients in the algebra $\mathbb{H}(\alpha,\beta)$ because Scilab is a free and open-source software, making it accessible and usable by a large number of users. Additionally, this software allows us to customize and adapt it to the specific needs and requirements of our problem. Scilab is renowned for its powerful functionality in numerical computation. It offers a wide range of mathematical and algebraic functions, including an integrated solver for polynomial equations. The built-in polynomial equation solver in Scilab provides us with the necessary tools to efficiently solve the monic quadratic equation with quaternionic coefficients. Scilab, such as Matlab, which is more widely known, has a user-friendly and intuitive interface, facilitating ease of use and navigation within the software. The programming is very intuitive and dosen’t require definition of any parameters, so the main
focus remains the mathematical modeling of the equations and the algorithm. This decision allows us to obtain precise and efficient results in studying and applying our new findings in quaternion algebra.

 	The aim of the paper is to present an innovative, efficient, and accurate method for the numerical solution of monic quadratic equations in the algebra of real quaternions using the Scilab software. We develop a new algorithm that solves these equations for any quaternionic coefficients in any algebra $\mathbb{H}(\alpha,\beta)$. Our ultimate goal is to contribute to the development and application of this knowledge in various fields such as computer graphics, physics, and engineering, opening up new research and application perspectives for quaternions and monic quadratic equations with quaternionic coefficients.

\section{PRELIMINARIES}

	The quadratic equation has been explored in the context of Hamilton quaternions in the works [HT; 02], [PR; 97]. In [HT; 02] analyzed the equation $x^2+bx+c=0$ and obtained explicit formulas for its roots. These formulas were subsequently used in the classification of quaternionic Möbius transformations [PS; 09], [CP; 04].
	In Hamilton quaternions, every nonzero element can be inverted, while in H(alfa, beta)  there exist split quaternions that cannot be inverted.
	 In an algebraic system, finding the roots of a quadratic equation is always connected to the factorization of a quadratic polynomial [LSS; 19]. In the case of real numbers ($\mathbb{R}$) and complex numbers ($\mathbb{C}$), the two problems are identical. However, in noncommutative algebra, these two problems are interconnected. Scharler et al. [SSS; 20] analyzed the factorizability of a quadratic split quaternion polynomial, revealing certain information about the roots of a split quaternionic quadratic equation.
	
	In a publication from 2022 [FZ; 22] exploring algebras derived from the Cayley-Dickson process presents challenges in achieving desirable properties due to computational complexities. Hence, the discovery of identities within these algebras assumes significance, aiding in the acquisition of new properties and facilitating calculations. To this end, the study introduces several fresh identities and properties within the algebras derived from the Cayley-Dickson process. Additionally, when specific elements serve as coefficients, quadratic equations in the real division quaternion algebra can be solved, showcasing the authors ability to provide direct solutions without relying on specialized software.
	
	In the paper [CW; 22], the author specifically focuses on deriving explicit formulas for the roots of the quadratic equation $x^2+bx+c=0$ where $b$ and $c$ are split quaternions  ($\mathbb{H}_S$).
	
	The same subject can be found in [AM; 09], where quadratic formulas for generalized quaternions are studied. It focuses on obtaining explicit formulas for the roots of quadratic equations in this specific context of generalized quaternions.\medskip

Let $\mathbb{H}(\alpha,\beta)$ be the generalized quaternion algebra over an arbitrary field $\mathbb{K}$, that is the algebra of the elements of the form 
$q = q_1 + q_2 e_1 + q_3 e_2 + q_4 e_3 $ where $q_{i}\in \mathbb{K},$ $ i\in \{1,2,3,4\}$, and the basis elements $\{1,e_{1},e_{2},e_{3}\}$ satisfy the
following multiplication table: 

\medskip\ \vspace{3mm}
\begin{equation}\label{eq1}
\begin{tabular}{c|cccc}
$\cdot $ & $1$ & $e_{1}$ & $e_{2}$ & $e_{3}$ \\ \hline
$1$ & $1$ & $e_{1}$ & $e_{2}$ & $e_{3}$ \\ 
$e_{1}$ & $e_{1}$ & $\alpha$ & $e_{3}$ & $\alpha e_{2}$ \\ 
$e_{2}$ & $e_{2}$ & $-e_{3}$ & $\beta$ & $- \beta e_{1}$ \\ 
$e_{3}$ & $e_{3}$ & $- \alpha e_{2}$ & $\beta e_{1}$ & $-\alpha \beta$
\medskip
\end{tabular}
\end{equation}

	The conjugate of a quaternion is obtained by changing the sign of the imaginary part: $\overline{q} = q_1 - q_2 e_1 - q_3 e_2 - q_4 e_3$, where $q = q_1 + q_2 e_1 + q_3 e_2 + q_4 e_3$.

	The norm of a quaternion is defined as the sum of the squares of its components, for this case, the norm is: 
 \begin{equation*}
\boldsymbol{n}\left( q\right)=q \cdot \overline{q} = ||q||^2= q_{1}^{2}- \alpha q_{2}^{2} - \beta q_{3}^{2}+ \alpha \beta q_{4}^{2}.
\end{equation*}

If for $x \in \mathbb{H}\left(\alpha,\beta \right) $, the relation $n(x)=0$ implies $x=0$, then the algebra $\mathbb{H}\left(\alpha,\beta \right) $ is called a division algebra, othewise the quaternion algebra is called a split algebra.(see [FS;15])

If $\alpha$ and $\beta$ are negative real numbers, it becomes a division algebra, therefore the norm will be different from zero.
 	The role of $\alpha$ and $\beta$ is to parameterize the coefficients of the quaternion algebra $\mathbb{H}(\alpha,\beta)$. These values determine the specific properties and structure of the quaternion algebra. In the multiplication table given in Equation \eqref{eq1}, $\alpha$ and $\beta$ appear as parameters that determine the specific structure and properties of the quaternion algebra $\mathbb{H}(\alpha,\beta)$.\\
	The role of the norm is to provide a measure of the size of a quaternion in the algebra $\mathbb{H}(\alpha,\beta)$. The norm expression involves the coefficients $q_1, q_2, q_3, q_4$, and the parameters $\alpha$ and $\beta$. The norm plays a crucial role in determining whether the algebra $\mathbb{H}(\alpha,\beta)$ is a division algebra or a split algebra, based on whether the norm is nonzero or zero, respectively.\\

Split quaternions form an algebraic structure and are linear combinations with real coefficients. Every quaternion can be written as a linear combination of the elements $1$, $e_1$, $e_2$, and $e_3$, where $e_1$, $e_2$, and $e_3$ are the imaginary units that satisfy the relations $e_1^2 = \alpha$, $e_2^2 = \beta$, and $e_3^2 = -\alpha\beta$.\medskip\\

We will now present some of the most important properties and relations of quaternions, which play a fundamental role in various fields such as physics, engineering, computer science, and applied mathematics:

\begin{itemize}
\item The addition is done component-wise:\\
$a=a_{1}\cdot 1+a_{2}e_{1}+a_{3}e_{2}+a_{4}e_{3},$\\
$b=b_{1}\cdot 1+b_{2}e_{1}+b_{3}e_{2}+b_{4}e_{3},$\\
$ \Rightarrow a+b=(a_{1}+b_{1})\cdot 1+(a_{2}+b_{2})e_{1}+(a_{3}+b_{3})e_{2}+(a_{4}+b_{4})e_{3}.$

\item Quaternion multiplication is not commutative:\\ 
$a\cdot b= (a_1 b_1+ \alpha a_2 b_2 + \beta a_3 b_3 - \alpha \beta a_4 b_4) + e_1 (a_1 b_2+  a_2 b_1 - \beta a_3 b_4 + \beta a_4 b_3) + e_2 (a_1 b_3+ \alpha a_2 b_4 +  a_3 b_1 - \alpha a_4 b_2) + e_3 (a_1 b_4+  a_2 b_3 -  a_3 b_2 + a_4 b_1)$\\
$b\cdot a= (a_1 b_1+ \alpha a_2 b_2 + \beta a_3 b_3 - \alpha \beta a_4 b_4) + e_1 (a_2 b_1+  a_1 b_2 - \beta a_4 b_3 + \beta a_3 b_4) + e_2 (a_3 b_1+ \alpha a_4 b_2 +  a_1 b_3 - \alpha a_2 b_4) + e_3 (a_4 b_1+  a_3 b_2 -  a_2 b_3 + a_1 b_4)$\\ 
$\Rightarrow a \cdot b \neq b \cdot a.$

\item Quaternions are associative:
$(a\cdot b)\cdot c =a \cdot (b \cdot c)= a \cdot b \cdot c.$

\item The trace of the element q: 
\begin{equation*}
t(q)=q+ \overline{q}, 
\end{equation*}

\item The multiplication of a quaternion by a scalar:
\begin{equation*}
\alpha \cdot q = \alpha \cdot (q_1 + q_2 e_1 + q_3 e_2 + q_4 e_3) = (\alpha \cdot q_1) + (\alpha \cdot q_2)\cdot e_1 +(\alpha \cdot q_3)\cdot e_2 + (\alpha \cdot q_4)\cdot e_3.
\end{equation*}

\item The inverse of a non-zero quaternion $q$ is given by
\begin{equation*}
q^{-1} = \frac{\overline{q}}{||q||^2} =\frac{ q_1 - q_2 e_1 - q_3 e_2 - q_4 e_3}{q_{1}^{2}- \alpha q_{2}^{2} - \beta q_{3}^{2}+ \alpha \beta q_{4}^{2}}.
\end{equation*}

\item The dot product of two quaternions can be defined as\\
 $q \cdot r = (qr + rq)/2$.
\end{itemize}

 These are just some of the many important relations and properties of quaternions.
 All these properties make quaternions a powerful tool in mathematics and practical applications.
 \begin{equation*}
\end{equation*}

\section{KNOWN RESULTS}

	In [W1] and [W2], to find the root of the equation $f(x_t)=0$, the Newton-Raphson method relies on the Taylor series expansion of the function around the estimate $x_i$ to find a better estimate $x_{i+1}$:

\begin{equation*}
f(x_{i+1})=f(x_i)+f'(x_i)(x_{i+1}-x_i)+\mathcal O(h^2)
\end{equation*}
where $x_{i+1}$ is the estimate of the root after iteration $i+1$ and $x_i$ is the estimate at iteration $i$. $\mathcal O(h^2)$ means the order of error of the Taylor series around the point $x_i$. Assuming $f(x_{i+1})=0$ and rearranging:

\begin{equation*}
x_{i+1} \approx x_i - \frac{f(x_i)}{f'(x_i)}
\end{equation*}

The procedure is as follows. Setting an initial guess $x_0$, a tolerance $\varepsilon_s$, and a maximum number of iterations $N$:

At iteration $i$, calculate $x_i \approx x_{i-1}-\frac{f(x_{i-1})}{f'(x_{i-1})}$ and $\varepsilon_r$. If $\varepsilon_r \leq \varepsilon_s$ or if $i \geq N$, stop the procedure. Otherwise, repeat.

In [HS; 02], the authors present specific formulas to solve the monic quadratic equation $x^2+bx+c=0$ with  $b, c\in \mathbb{H}\left(\alpha,\beta \right), $ where $\alpha= -1$, $\beta =-1,$ the real division algebra, according to the multiplication table presented in \eqref{eq1}.
In the following we present the results we will use in developing our solutions, and a proof of lemma 2:

\begin{lemma} ([HS;02], Lemma 2.1)
Let $A, B, C\in \mathbf{R}$ 
with the following properties: $C \neq 0,$ 
$A<0 $  implies $ A^2<4B.$

Then the equation of order $3$:
\begin{equation}
y^3+2 A y^2+(A^2-4 B) y-C^2=0
\end{equation}
has exactly one positive solution $y$.
\end{lemma}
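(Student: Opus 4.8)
The plan is to study the cubic $p(y) = y^3 + 2Ay^2 + (A^2 - 4B)y - C^2$ as a real function of a real variable and count its positive roots by a combination of sign-analysis and monotonicity. First I would observe the two easy boundary facts: $p(0) = -C^2 < 0$ (using $C \neq 0$), and $p(y) \to +\infty$ as $y \to +\infty$ since the leading coefficient is positive. By the intermediate value theorem this already gives \emph{at least one} positive root. The whole content of the lemma is therefore the uniqueness claim, i.e. that $p$ cannot have three positive roots (counted with multiplicity) or a positive root together with a sign pattern forcing a second crossing.

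For uniqueness I would split into the two cases dictated by the hypothesis. If $A \geq 0$, then all coefficients behave favorably: write $p'(y) = 3y^2 + 4Ay + (A^2 - 4B)$. I would try to show that on $(0,\infty)$ the function $p$ has no more than one sign change, for instance via Descartes' rule of signs applied to the coefficient sequence $(1,\, 2A,\, A^2 - 4B,\, -C^2)$ — the last term is negative and the first is positive, so the number of positive roots is odd, hence $1$ or $3$; ruling out $3$ is where the hypothesis must be used. The cleaner route is probably to show $p$ is eventually increasing and has exactly one critical point in $(0,\infty)$ that is a minimum, or alternatively that any positive root is simple and $p' > 0$ there, which prevents a return crossing. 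If $A < 0$, the hypothesis $A^2 < 4B$ forces $A^2 - 4B < 0$, so the constant-ish middle coefficient is negative; then $p'(0) = A^2 - 4B < 0$ while $p'(y) \to +\infty$, and $p'$ being an upward parabola has exactly one positive root $y^\ast$, so $p$ is decreasing on $(0, y^\ast)$ and increasing on $(y^\ast,\infty)$. Since $p(0) < 0$ and $p$ decreases first, $p$ stays negative on $(0,y^\ast)$ and then rises to $+\infty$, crossing zero exactly once. That settles the $A<0$ case cleanly.

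The remaining work, and the main obstacle, is the case $A \geq 0$, where $A^2 - 4B$ can be positive, zero, or negative and $p'$ may have two positive roots, allowing $p$ a local max then a local min in $(0,\infty)$. I would need to rule out the configuration where the local maximum value is positive and the local minimum value is negative (which would create three positive roots). The key inequality to extract is that at the local maximum $y_1 \in (0,\infty)$ one has $p(y_1) \leq 0$; I would attempt this by using $p'(y_1) = 0$ to eliminate the cubic and quadratic terms, reducing $p(y_1)$ to a linear-in-$y_1$ expression, and then bounding it using $C \neq 0$ together with the relation between $A$, $B$ defining $y_1$. I expect this to come down to showing a discriminant-type quantity is non-positive, i.e. a polynomial inequality in $A$ and $B$ on the region $A \geq 0$; once that is in hand, $p$ has a single sign change on $(0,\infty)$ and the proof is complete. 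An alternative, possibly slicker, approach for the whole lemma is to recall that this cubic is precisely the resolvent arising from $\|q\|$ where $q$ solves the original quadratic — its positive root corresponds to $\|q\|^2$ for a genuine quaternionic solution — and to argue uniqueness from that structural source, but I would keep the elementary calculus argument as the primary line since it is self-contained.
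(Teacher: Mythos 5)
The paper itself contains no proof of this lemma (it is quoted from [HS;02]; the only proof reproduced in the paper is that of the subsequent Lemma 2.2), so your proposal has to stand on its own — and as written it has a genuine gap: the case $A \geq 0$ is never actually proved. You correctly get existence from $p(0)=-C^2<0$ and $p(y)\to+\infty$, and your $A<0$ argument (hypothesis forces $A^2-4B<0$, so $p'$ has one positive root and $p$ is decreasing-then-increasing on $(0,\infty)$, giving a single crossing) is complete. But for $A\ge 0$ you only describe a plan: show that at a putative positive local maximum $y_1$ one has $p(y_1)\le 0$ by eliminating terms via $p'(y_1)=0$ and establishing an unspecified ``discriminant-type'' inequality in $A,B$. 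That inequality is never derived, so uniqueness in the case you yourself call the main obstacle is left open; a reader cannot reconstruct the proof from what you wrote.

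The obstacle is in fact illusory, and the gap closes by the very argument you used for $A<0$. The critical points of $p$ are the roots of $p'(y)=3y^2+4Ay+(A^2-4B)$, whose sum is $-\tfrac{4A}{3}\le 0$ when $A\ge 0$; hence at most one critical point is positive, the feared configuration of a local maximum followed by a local minimum inside $(0,\infty)$ cannot occur, and on $(0,\infty)$ the cubic is either increasing or decreasing-then-increasing. With $p(0)=-C^2<0$ this gives exactly one positive root, and no hypothesis on $A,B$ is even needed when $A\ge 0$ (the hypothesis is only used for $A<0$, contrary to your remark that ruling out three roots is ``where the hypothesis must be used''). Equivalently, you invoked only the parity half of Descartes' rule (``odd, hence $1$ or $3$''); the full rule bounds the number of positive roots by the number of sign variations of $(1,\,2A,\,A^2-4B,\,-C^2)$, which is exactly one in every case permitted by the hypothesis, so three roots are excluded outright. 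A third one-line repair: three positive roots would give, by Vieta, $-2A>0$ and $A^2-4B>0$, contradicting the hypothesis. Any of these finishes your $A\ge 0$ case; without one of them, the proposal is not yet a proof.
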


\begin{lemma} ([HS; 02], Lemma 2.2)
Let $A, B, C \in R$ such that:

 $B\geq 0$ and
 $A < 0$ implies $A^2<4B$
then the real system:\\
\begin{equation}
\begin{cases}
  Y^2 - (A + W^2)Y + B = 0 \\
  W^3 + (A - 2Y)W + C = 0 \\
\end{cases}
\end{equation}
\\
has at most two solutions $(W,Y)$ with $W \in \mathbf{R}$ and $Y\geq 0$ as follows:

\begin{enumerate}[label=(\roman*)]
\item $W=0$, $Y=\frac{A\pm \sqrt{A^2-4B}}{2}$ provided that $C=0$, $A^2\geq 4B$;
\item $W=\pm \sqrt{2\sqrt{B}-A}$, $Y=\sqrt{B}$ provided that $C=0$, $A^2<4B$.
\item $W=\pm
\sqrt{z}$, $Y=\frac{W^3+AW+C}{2W}$  provided that $C \neq 0$ and $z$ is the unique positive solution of the real polynomial:
\begin{center}
$z^3+2A z^2+(A^2-4B)z-C^2=0.$
\end{center}
\end{enumerate}
\end{lemma}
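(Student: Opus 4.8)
The plan is to split the analysis according to whether $C=0$. The key preliminary observations are that putting $W=0$ in the second equation makes it read $C=0$, so when $C\neq 0$ every solution has $W\neq 0$; and that when $C=0$ the second equation factors as $W\,(W^2+A-2Y)=0$, splitting further into $W=0$ or $Y=(W^2+A)/2$. These two regimes will produce, respectively, the two solutions of (iii) and the alternatives (i)--(ii).

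Consider first $C\neq 0$, so $W\neq 0$. Solving the second equation gives $Y=\dfrac{W^3+AW+C}{2W}$, which I substitute into the first. Multiplying the first equation through by $4W^2$ and setting $u:=W^3+AW+C=2WY$, together with the identity $W(A+W^2)=u-C$, I expect the mixed terms to cancel and leave $u^2-2Cu-4W^2B=0$. Rewriting $u^2-2Cu=(u-C)^2-C^2=W^2(W^2+A)^2-C^2$ and putting $z:=W^2$ turns this into exactly $z^3+2Az^2+(A^2-4B)z-C^2=0$, the polynomial of Lemma~2.1, whose hypotheses ($C\neq 0$ and "$A<0\Rightarrow A^2<4B$") are precisely the ones assumed here. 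Hence $z$ is forced to be the unique positive root, $W=\pm\sqrt z$, and $Y$ is recovered from the displayed formula; since every step is reversible for $W\neq 0$, these are genuine solutions, which are the two listed in (iii).

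It then remains to check $Y\geq 0$ in that case. Writing $Y_+$ and $Y_-$ for the values corresponding to $W=+\sqrt z$ and $W=-\sqrt z$, a short computation yields $Y_++Y_-=z+A$ and $Y_+Y_-=\dfrac{z(z+A)^2-C^2}{4z}=B\geq 0$ (the last equality being the cubic). So it suffices to show $z+A>0$. This is immediate if $A\geq 0$. If $A<0$, then $A^2<4B$ forces $B>0$, so with $g(t):=t^3+2At^2+(A^2-4B)t-C^2$ one gets $g(-A)=4AB-C^2<0$; since $g(0)=-C^2<0$ and, by Lemma~2.1, $g$ has a single positive root $z$ — hence $g<0$ on $(0,z)$ and $g>0$ on $(z,\infty)$ — the fact that $g(-A)<0$ with $-A>0$ forces $-A<z$, i.e. $z+A>0$. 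Therefore both $Y_\pm\geq 0$ and (iii) holds.

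Finally suppose $C=0$. If $W=0$, the first equation becomes $Y^2-AY+B=0$, which has real roots iff $A^2\geq 4B$; and when $A^2\geq 4B$ the hypothesis "$A<0\Rightarrow A^2<4B$" forces $A\geq 0$, so both roots are nonnegative (their sum $A$ and product $B$ are $\geq 0$) — this is (i). If instead $W\neq 0$, then $Y=(W^2+A)/2$, and substituting into the first equation collapses it to $(W^2+A)^2=4B$, so $W^2=-A\pm 2\sqrt B$; only $W^2=2\sqrt B-A$ can be positive (the other choice is $\leq 0$, using $A^2<4B$ when $A<0$), and it is positive exactly when $A^2<4B$, giving $W=\pm\sqrt{2\sqrt B-A}$ and $Y=\sqrt B$ — this is (ii). Since $W=0$ requires $A^2\geq 4B$ while $W\neq 0$ requires $A^2<4B$, the two sub-cases are mutually exclusive, so in every case there are at most two solutions. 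The step I expect to be the main obstacle is the algebraic reduction in the $C\neq 0$ branch — arranging the substitution so that it collapses exactly onto the cubic of Lemma~2.1 — together with the sign fact $z+A>0$, which is what upgrades "roots of the cubic" to "solutions with $Y\geq 0$" and which relies crucially on the uniqueness asserted in Lemma~2.1.
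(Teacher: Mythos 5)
Your proof is correct, and its skeleton is the same as the paper's: split on $C=0$ versus $C\neq 0$, factor the second equation when $C=0$, and when $C\neq 0$ solve the second equation for $Y$ and substitute into the first so that $z=W^2$ satisfies the cubic of Lemma~2.1. The difference is one of completeness rather than route: the paper's proof is only a sketch that names these substitutions without carrying them out, whereas you actually perform the reduction (the identity $u^2-2Cu-4W^2B=0$ with $u=2WY$ collapsing to $z^3+2Az^2+(A^2-4B)z-C^2=0$), you verify the constraint $Y\geq 0$ in case (iii) via $Y_++Y_-=z+A>0$ and $Y_+Y_-=B\geq 0$ (using $g(-A)=4AB-C^2<0$ and the uniqueness from Lemma~2.1), and you handle the $C=0$, $W\neq 0$ branch correctly, obtaining $(W^2+A)^2=4B$ and discarding the root $W^2=-A-2\sqrt B$. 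Note that the paper's sketch is actually misleading at this last point: it asserts that for $W=0$ and $A^2<4B$ "there is one real solution for $Y$," when in fact the quadratic $Y^2-AY+B=0$ then has no real roots and case (ii) arises only with $W\neq 0$; your treatment repairs this, and also supplies the $Y\geq 0$ checks that the paper omits entirely.
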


\begin{proof}
Let $A, B, C \in \mathbf{R}$ such that $B \ge 0$ and $A < 0 \implies A^2 < 4B$. 
\newline 
We want to show that the real system


has at most two solutions $(W,Y)$ with $W \in \mathbf{R}$ and $Y \ge 0$ as follows:

$W=0$, $Y=\frac{A\pm \sqrt{A^2-4B}}{2}$ provided that $C=0$, $A^2\geq 4B$;\newline 
$W=\pm \sqrt{2\sqrt{B}-A}, Y=\sqrt{B}$ provided that $C=0$, $A^2<4B$;\newline 
$W=\pm \sqrt{z}, Y=\frac{W^3+AW+C}{2W}$ provided that $C \neq 0$ and $z$ is the unique positive solution of the real polynomial:
\begin{center}
$z^3+2Az^2+(A^2-4B)z-C^2=0$
\end{center}
From Lemma 2.1, we know that the polynomial $z^3+2Az^2+(A^2-4B)z-C^2=0$ has exactly one positive solution $z$ when $C \neq 0$.

1. and 2. are the cases when $C = 0$. In these cases, the first equation becomes a quadratic equation in $Y$. \newline 
If $A^2 \ge 4B$, there are two real solutions for $Y$, and if $A^2 < 4B$, there is one real solution for $Y$. \newline  
Since $W=0$, these solutions correspond to the cases 1. and 2. in the lemma.

3. is the case when $C \neq 0$. In this case, we can express $Y$ as a function of $W$ using the second equation: $Y = \frac{W^3 + AW + C}{2W}$. Substituting this expression for $Y$ in the first equation, we obtain a polynomial equation in $W^2$ of degree 3. Since $z$ is the unique positive solution of this polynomial, there are two solutions for $W$: $W = \pm \sqrt{z}$. These solutions correspond to the case 3. in the lemma.

In conclusion, the real system (3) has at most two solutions $(W, Y)$ with $W \in \mathbf{R}$ and $Y \ge 0$ as described in the lemma.
\end{proof}

\begin{theorem}\label{t23} ([HS;02], Theorem 2.3)
The solution of the quadratic equation $x^2+bx+c=0$ can be obtained in the following way:
\begin{enumerate}
\item Case 1. If $b,c \in \mathbf{R}$ and $b^2<4c$ then:
\begin{equation}
x=\frac{1}{2}(-b+e \cdot e_1+f \cdot e_2+g \cdot e_3)
\end{equation}
 where $e^2+f^2+g^2=4c-b^2$ where $e,f,g \in \mathbf{R}$.
\item
Case 2. If $b,c \in \mathbf{R}$ and $b^2\geq 4c$ then:
\begin{equation}
 x=\dfrac{-b\pm\sqrt{b^2-4c}}{2}
\end{equation}
\item
Case 3.
If $b \in \mathbf{R},\ c \notin \mathbf{R}$ then:
\begin{equation}
x=\frac{-b}{2}\pm\frac{m}{2} \mp \frac{ c_1}{m } \cdot e_1 \mp\frac{c_2}{m}\cdot e_2 \mp \frac{c_3}{m} \cdot e_3
\end{equation}

where $c=c_0+c_1 \cdot e_1+c_2 \cdot e_2+c_3 \cdot e_3$, and
\begin{equation}
m=\sqrt{\dfrac{b^2-4c_0+\sqrt{(b^2-4c_0)^2+16(c_1^2+c_2^2+c_3^2)}}{2}}\end{equation}

\item
Case 4.
If $b \notin \mathbf{R}$ then:
\begin{equation}  \label{ecuatia 9}
x=\dfrac{(-Re(b))}{2}-(b^\prime+W)^{-1} (c^\prime-Y)
\end{equation}

where $b^\prime=b-Re(b)=Im(b)$,
$c^\prime=c-(Re(b)/2)(b-(Re(b))/2)$,
where $(W,Y)$ are chosen in the following way:

\end{enumerate}

\begin{enumerate}[label=(\roman*)]
\item $W=0,$ $Y=(A\pm \sqrt{A^2-4B})/2$ provided that $C=0,$  $A^2 \geq4B$
\item $W=\pm \sqrt{2\sqrt{B}-A},$ $Y=\sqrt{B}$ provided that $C=0,$ $A^2 < 4B$
\item $W=\pm \sqrt{z},$ $Y=(W^3+AW+C)/2W$ provided that $C \neq 0$ and $z$ is the unique positive solution of the equation:\\

$z^3+2Az^2+(A^2-4B)z-C^2=0$\\

where $A=\vert b^\prime\vert^2+2Re(c^\prime),$ $B=\vert c^\prime\vert^2$ and $C=2Re(\overline{b^\prime}c^\prime)$.
\end{enumerate}
\end{theorem}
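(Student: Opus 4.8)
The plan is to reduce the quaternionic equation $x^2+bx+c=0$ to a real system of the type analyzed in Lemma 2.2, and then read off the four cases according to whether $b$ and $c$ are real. First I would handle the degenerate cases: if both $b,c\in\mathbf R$, then $x^2+bx+c=0$ may be solved by completing the square, writing $x=-b/2+v$ with $v^2=(b^2-4c)/4$. If $b^2\ge 4c$ the right side is a nonnegative real, so $v$ is real and we get Case 2; if $b^2<4c$ the right side is a negative real, and since any purely imaginary quaternion $ee_1+fe_2+ge_3$ squares to $-(e^2+f^2+g^2)$ in the division algebra $\mathbb H(-1,-1)$, we can realize $v$ by choosing $e^2+f^2+g^2=4c-b^2$, giving Case 1. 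Case 3 ($b\in\mathbf R$, $c\notin\mathbf R$) is again completing the square: $x=-b/2+v$ with $v^2=(b^2-4c_0)/4 - c_1e_1-c_2e_2-c_3e_3$, and one solves for a quaternion $v=v_0+v_1e_1+v_2e_2+v_3e_3$ by matching components; the modulus computation gives the formula for $m$, and the sign choices produce the $\pm/\mp$ pattern.

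The substantive case is Case 4, where $b\notin\mathbf R$. Here completing the square does not suffice because multiplication is noncommutative, so I would instead substitute $x=-\operatorname{Re}(b)/2 + t$ to kill the real part of $b$; a short computation shows $x^2+bx+c=0$ becomes $t^2+b't+c'=0$ with $b'=\operatorname{Im}(b)$ purely imaginary and $c'=c-(\operatorname{Re}(b)/2)(b-\operatorname{Re}(b)/2)$. Now I would look for $t$ of the form $t=-(b'+W)^{-1}(c'-Y)$ with $W$ purely imaginary... actually the cleaner route, and the one matching the stated answer, is to posit $t = -(b'+W)^{-1}(c'-Y)$ for real scalars $W$ (interpreted as $We_1$-type, i.e.\ along a suitable imaginary direction) and $Y$; substituting into $t^2+b't+c'=0$ and separating the equation into its scalar part and its vector part yields exactly two real equations. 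Carrying out the algebra, using $|b'|^2 = -(b')^2$ and the identities for $\operatorname{Re}(\overline{b'}c')$, these two equations become
\begin{equation*}
Y^2-(A+W^2)Y+B=0,\qquad W^3+(A-2Y)W+C=0,
\end{equation*}
with $A=|b'|^2+2\operatorname{Re}(c')$, $B=|c'|^2$, $C=2\operatorname{Re}(\overline{b'}c')$. One checks $B=|c'|^2\ge0$ and the hypothesis $A<0\implies A^2<4B$ holds in the division algebra (this uses positivity of the norm), so Lemma 2.2 applies and gives the three sub-cases (i)--(iii) for $(W,Y)$, which are exactly those listed. Finally I would verify that $b'+W$ is invertible whenever it appears (automatic in $\mathbb H(-1,-1)$ since it is a nonzero element of a division algebra, as $W$ real and $b'\ne0$), so $t$, hence $x$, is well defined.

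The main obstacle I expect is the \emph{bookkeeping} in Case 4: correctly expanding $t^2+b't+c'$ with $t=-(b'+W)^{-1}(c'-Y)$, keeping track of the noncommutativity, and then cleanly separating scalar and vector parts so that the messy quaternion identity collapses to the two-variable real system with precisely the coefficients $A,B,C$ claimed. A secondary subtlety is justifying the direction along which $W$ lives and why the ansatz $t=-(b'+W)^{-1}(c'-Y)$ captures \emph{all} solutions rather than just some — this requires arguing that any solution $t$ must have its "component structure" aligned with $b'$, which follows from taking real parts of $t^2+b't+c'=0$ and of its conjugate-multiplied versions. Once the system is in hand, the enumeration of solutions is entirely a citation of Lemma 2.2, so no new difficulty arises there.
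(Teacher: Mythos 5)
The paper itself does not prove this theorem: it is quoted as a known result from [HS;02], and the only ingredient the paper develops around it is Lemma 2.2 (the real system in $(W,Y)$). Your overall route — completing the square for Cases 1--3 and, for Case 4, shifting $x=-\operatorname{Re}(b)/2+t$ to get $t^2+b't+c'=0$ and reducing to the real system of Lemma 2.2 — is exactly the strategy of the cited source and of the surrounding paper, and your treatment of Cases 1--3 is fine as sketched (the component-matching in Case 3 does produce the stated $m$).

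The genuine gap is in Case 4. You treat $t=-(b'+W)^{-1}(c'-Y)$ as an ansatz, hesitate over whether $W$ is a real scalar or lies ``along a suitable imaginary direction,'' and defer completeness to a vague claim that any root must be ``aligned with $b'$.'' The missing idea is the quaternionic characteristic identity $t^2-(t+\bar t)\,t+n(t)=0$: subtracting it from $t^2+b't+c'=0$ gives the linear relation $(b'+W)t=Y-c'$ with $W:=t+\bar t\in\mathbf R$ and $Y:=n(t)\ge 0$. This settles in one stroke everything your sketch leaves open: $W$ is a real scalar (so $b'+W\neq 0$ is invertible, $b'$ being nonzero purely imaginary), every root — not just those of a special shape — has the stated form, and $Y\ge 0$, which is a hypothesis you need in order to invoke Lemma 2.2 but never justify. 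Also, the two real equations do not come from ``separating scalar and vector parts'' of the substituted quadratic (that separation yields four real equations); they come from applying the norm and the trace to $(b'+W)t=Y-c'$, which gives $(W^2+|b'|^2)Y=Y^2-2\operatorname{Re}(c')Y+B$, i.e. $Y^2-(A+W^2)Y+B=0$, and $W(W^2+|b'|^2)=2WY-2\operatorname{Re}(c')W-C$, i.e. $W^3+(A-2Y)W+C=0$. Finally, the side condition of Lemma 2.2, namely $A<0\Rightarrow A^2<4B$, is asserted but should be checked: if $A=|b'|^2+2\operatorname{Re}(c')<0$ then $\operatorname{Re}(c')<0$ and $A^2-4B=|b'|^2\bigl(|b'|^2+4\operatorname{Re}(c')\bigr)-4|\operatorname{Im}(c')|^2<0$. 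With these points supplied, your outline becomes the standard proof; without them it neither shows the listed roots are all the roots nor that Lemma 2.2 is applicable.
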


\begin{corollary}([HS;02], Corollary 2.4)
The equation has an infinity of solutions if $b, c \in \mathbf{R}$ and $b^2< 4c$.
\end{corollary}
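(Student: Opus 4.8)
The plan is to read the result off directly from Case~1 of Theorem~\ref{t23}. Assume $b, c \in \mathbf{R}$ with $b^2 < 4c$. By Theorem~\ref{t23}, an element $x$ solves $x^2 + bx + c = 0$ exactly when
$x = \tfrac{1}{2}(-b + e\,e_1 + f\,e_2 + g\,e_3)$ for some $e, f, g \in \mathbf{R}$ with $e^2 + f^2 + g^2 = 4c - b^2$. First I would observe that the hypothesis $b^2 < 4c$ makes $r^2 := 4c - b^2$ strictly positive, so the admissible triples $(e,f,g)$ form the Euclidean sphere of radius $r>0$ in $\mathbf{R}^3$, which is an infinite (indeed uncountable) set. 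Since $\{1, e_1, e_2, e_3\}$ is an $\mathbf{R}$-basis of $\mathbb{H}(-1,-1)$, the affine map $(e,f,g) \mapsto \tfrac{1}{2}(-b + e\,e_1 + f\,e_2 + g\,e_3)$ is injective, so distinct triples yield distinct roots. Hence the equation has infinitely many solutions, which is the assertion.

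For completeness I would also include the elementary direct check that every such $x$ is a root, so the corollary does not lean on the full strength of Theorem~\ref{t23}: writing $v = e\,e_1 + f\,e_2 + g\,e_3$ with $e^2 + f^2 + g^2 = 4c - b^2$ and using the multiplication table~\eqref{eq1} with $\alpha = \beta = -1$ gives $v^2 = -(e^2 + f^2 + g^2) = b^2 - 4c$; then, since the scalar $b$ commutes with $v$,
\begin{equation*}
x^2 + bx + c = \tfrac{1}{4}(v-b)^2 + \tfrac{b}{2}(v-b) + c = \tfrac{1}{4}v^2 - \tfrac{b^2}{4} + c = \tfrac{1}{4}(b^2 - 4c) - \tfrac{b^2}{4} + c = 0 .
\end{equation*}

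There is no genuinely hard step here; the content is essentially that a sphere of positive radius has infinitely many points. The only point that deserves a word of care is that the hypothesis $b^2 < 4c$ is \emph{exactly} what guarantees the locus $e^2 + f^2 + g^2 = 4c - b^2$ is nonempty and of positive radius --- if $b^2 = 4c$ one would be left with the single root $x = -b/2$ and the argument would collapse --- together with the fact that we are in the real division algebra $\mathbb{H}(-1,-1)$, where $e^2 + f^2 + g^2$ is positive definite, so this locus is a true Euclidean sphere rather than a possibly empty quadric.
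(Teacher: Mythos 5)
Your proposal is correct and takes the same route the paper intends: the corollary is stated (following [HS;02]) as an immediate consequence of Case~1 of Theorem~\ref{t23}, where the solutions are parameterized by the sphere $e^2+f^2+g^2=4c-b^2$ of positive radius, and your injectivity remark plus the direct check that each such $x$ is a root make the argument self-contained. Note only that the theorem as stated gives the ``if'' direction (these are solutions), which is all the corollary needs, so your phrase ``exactly when'' claims slightly more than is required but introduces no gap since you verify the relevant direction explicitly.
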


\begin{corollary}([HS;02], Corollary 2.6)
The equation has an unique solution if and only if:
\begin{enumerate} 
\item $b, c \in \mathbf{R}$ and $b^2-4c=0$
 \item $b \notin \mathbf{R}$ and $C=0=A^2-4B$
\end{enumerate}
\end{corollary}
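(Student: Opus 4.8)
The plan is to read off the number of solutions from Theorem~\ref{t23} together with Lemma 2.2 and, in each of the four cases, determine precisely when that number equals one; since Theorem~\ref{t23} produces at least one root in every situation, existence is automatic and only the counting is at stake. The four cases are mutually exclusive and exhaustive (first according to whether $b\in\mathbf{R}$, and then according to how $b^2$ compares with $4c$, respectively to whether $c\in\mathbf{R}$), so a unique solution must occur inside exactly one of them, and I would go through them in turn.

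Cases 1--3 are quickly handled. In Case 1 ($b,c\in\mathbf{R}$, $b^2<4c$) Corollary 2.4 already gives infinitely many roots, so uniqueness is impossible. In Case 2 ($b,c\in\mathbf{R}$, $b^2\ge 4c$) the two candidates $\tfrac{-b\pm\sqrt{b^2-4c}}{2}$ merge exactly when $b^2-4c=0$; and in that degenerate case the substitution $y=x+\tfrac b2$ rewrites the equation as $y^2=0$, whence $n(y)^2=n(y^2)=0$ forces $y=0$ in the division algebra, so no further roots are lurking. This is condition~1. In Case 3 ($b\in\mathbf{R}$, $c\notin\mathbf{R}$) the hypothesis $c\notin\mathbf{R}$ gives $c_1^2+c_2^2+c_3^2>0$, so the radicand defining $m$ strictly exceeds $|b^2-4c_0|$ and $m>0$; the two roots attached to the two sign choices then differ in their real part by $m\neq 0$, so Case 3 always has exactly two roots and never a unique one.

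The real content is Case 4 ($b\notin\mathbf{R}$), in which the roots are parametrized by the admissible pairs $(W,Y)$ of Lemma 2.2 with $A=|b'|^2+2Re(c')$, $B=|c'|^2$, $C=2Re(\overline{b'}c')$. In sub-case (ii), $A^2<4B$ forces $2\sqrt{B}-A>0$, so $W=\pm\sqrt{2\sqrt{B}-A}$ are two distinct nonzero values; in sub-case (iii), $z>0$ makes $W=\pm\sqrt{z}$ two distinct nonzero values; in either of these we obtain two distinct roots. Only sub-case (i), with $C=0$ and $A^2\ge 4B$, can collapse, and it collapses exactly when $A^2-4B=0$: then the single admissible pair is $(W,Y)=(0,\tfrac A2)$ --- and here $A\ge 0$, since $A<0$ would force $A^2<4B$ --- giving the single root $x=-\tfrac{Re(b)}{2}-(b')^{-1}\bigl(c'-\tfrac A2\bigr)$. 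Collecting the four cases gives the ``only if'' direction; the ``if'' direction is precisely the observation, recorded case by case above, that each of conditions~1 and~2 places us in the collapsed situation, and since the two conditions are mutually exclusive this completes the characterization.

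The step I expect to require the most care is justifying, in Case 4, that the correspondence between roots and admissible pairs $(W,Y)$ is genuinely a bijection --- in particular that two different admissible pairs cannot yield the same $x$. I would argue this by inverting the formula $x=-\tfrac{Re(b)}{2}-(b'+W)^{-1}(c'-Y)$: the scalar $W$ is determined by $Re(x)$, and then $Y$ is determined by the second equation of the system in Lemma 2.2, using throughout that $b'+W$ has imaginary part $Im(b)\neq 0$ and is therefore invertible in the division algebra. A secondary point worth verifying is that the list of admissible $(W,Y)$ in Lemma 2.2 is exhaustive in each regime, so that no root escapes outside the three sub-cases.
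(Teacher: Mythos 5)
Your proposal is correct and follows what is essentially the intended argument: the paper itself states this corollary without proof (citing [HS;02]), and the natural derivation is exactly your case-by-case count of solutions from Theorem \ref{t23} and Lemma 2.2 --- infinitely many in Case 1, collapse of the two real roots precisely when $b^2-4c=0$ in Case 2 (your norm argument $n(y)^2=n(y^2)=0$ correctly rules out extra roots in the division algebra), always two roots in Case 3 since $m>0$, and in Case 4 a unique root exactly when $C=0=A^2-4B$. Your flagged point about the correspondence between admissible pairs $(W,Y)$ and roots being bijective is the right thing to check and is handled as you sketch, so no genuine gap remains.
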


\begin{corollary} If the quadratic equation $x^2+bx+c=0$ has real coefficients $b$ and $c$, and $b^2<4c$, then the solution of the equation can be expressed as $x=\frac{1}{2}(-b+e \cdot e_1+f \cdot e_2+g \cdot e_3)$, where $e^2+f^2+g^2=4c-b^2$ and $e,f,g \in \mathbf{R}$.
\end{corollary}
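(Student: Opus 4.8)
The statement is essentially Case~1 of Theorem~\ref{t23} combined with Corollary~2.4, so one legitimate route is simply to invoke those. I would nonetheless give a short, self-contained direct verification, since it is instructive. The plan is to substitute the candidate expression into $x^2+bx+c$ and check that it vanishes, exploiting that $b$ (being real) is central in $\mathbb{H}(\alpha,\beta)$ and that the product of two \emph{distinct} imaginary units anticommutes.

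First I would write $x=-\tfrac{b}{2}+v$, where $v=\tfrac12(e\,e_1+f\,e_2+g\,e_3)$ is a quaternion with zero real part. Because $b\in\mathbf{R}$ commutes with every element, $x^2=\tfrac{b^2}{4}-bv+v^2$ and $bx=-\tfrac{b^2}{2}+bv$, so the linear-in-$v$ terms cancel and $x^2+bx+c=v^2+c-\tfrac{b^2}{4}$. Hence it suffices to show that $v^2=\tfrac{b^2-4c}{4}$.

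Next I would compute $v^2$ directly from the multiplication table \eqref{eq1}. Expanding $(e\,e_1+f\,e_2+g\,e_3)^2$, the mixed terms group as $e_1e_2+e_2e_1$, $e_1e_3+e_3e_1$, $e_2e_3+e_3e_2$, each of which is $0$ by the table, while the diagonal terms contribute $\alpha e^2+\beta f^2-\alpha\beta g^2$. In the real division algebra $\mathbb{H}(-1,-1)$ under consideration this reduces to $-(e^2+f^2+g^2)$, so $v^2=-\tfrac14(e^2+f^2+g^2)$. Imposing the constraint $e^2+f^2+g^2=4c-b^2$ then gives $v^2=\tfrac{b^2-4c}{4}$, as needed; and since the hypothesis $b^2<4c$ forces $4c-b^2>0$, real triples $(e,f,g)$ with that sum of squares do exist — in fact a whole sphere of them — which also re-derives the infinitude of solutions recorded in Corollary~2.4.

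The computation presents no genuine obstacle; the only points that require care are the centrality of $b$ (so that the terms linear in $v$ truly cancel), the sign bookkeeping in evaluating $v^2$, and making explicit that we are in the case $\alpha=\beta=-1$, which is what collapses $\alpha e^2+\beta f^2-\alpha\beta g^2$ to $-(e^2+f^2+g^2)$.
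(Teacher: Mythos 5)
Your verification is correct, but it is worth noting that the paper itself gives no argument here: this corollary is a verbatim restatement of Case~1 of Theorem~\ref{t23}, quoted from [HS;02], and is left unproved. Your route is therefore genuinely more self-contained: writing $x=-\tfrac b2+v$ with purely imaginary $v=\tfrac12(e\,e_1+f\,e_2+g\,e_3)$, using centrality of the real scalar $b$ to reduce the problem to $v^2=\tfrac{b^2-4c}{4}$, and then evaluating $v^2=\alpha e^2+\beta f^2-\alpha\beta g^2$ (divided by $4$) from the multiplication table~\eqref{eq1} is exactly the right computation, and your sign bookkeeping (the mixed products $e_ie_j+e_je_i$ vanishing) is accurate. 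Your explicit remark that the constraint $e^2+f^2+g^2=4c-b^2$ is the specialization $\alpha=\beta=-1$ is also a genuine improvement on the paper, which states the corollary in a context where $\mathbb{H}(\alpha,\beta)$ is general even though the formula only holds in that normalized division-algebra case (in general one would need $-\alpha e^2-\beta f^2+\alpha\beta g^2=4c-b^2$). The one point your direct check does not deliver, and which invoking Theorem~\ref{t23} does, is completeness: you show every $x$ of the stated form is a root and that a whole sphere of them exists, but not that every root has this form; to get that converse directly you would write an arbitrary $x=x_1+x_2e_1+x_3e_2+x_4e_3$, compare real and imaginary parts of $x^2+bx+c=0$, and deduce $x_1=-\tfrac b2$ together with the norm condition on the imaginary part. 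Since you also allow yourself the fallback of citing Case~1 of the theorem, this is a presentational gap rather than a mathematical one.
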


\begin{corollary} If the quadratic equation $x^2+bx+c=0$ has real coefficients $b$ and $c$, and $b^2\geq 4c$, then the solutions of the equation are $x=\dfrac{-b\pm\sqrt{b^2-4c}}{2}$.
\end{corollary}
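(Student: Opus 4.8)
The plan is to read this corollary as the special case ``Case 2'' of Theorem \ref{t23}, and to supply the two ingredients that turn that case into a complete statement: first that the displayed pair of numbers really are roots, and second that there are no others. Since $b^2\ge 4c$, the number $\sqrt{b^2-4c}$ is a nonnegative real, so $x_{\pm}:=\tfrac12\bigl(-b\pm\sqrt{b^2-4c}\bigr)$ lies in $\mathbf R\cdot 1$, the centre of $\mathbb{H}(\alpha,\beta)$. Real scalars multiply among themselves and with the real coefficients $b,c$ exactly as in $\mathbf R$, so the assertion $x_{\pm}^2+bx_{\pm}+c=0$ is just the classical quadratic identity over $\mathbf R$; this disposes of the existence half, with $x_+=x_-=-b/2$ when $b^2=4c$.

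For the converse --- that every solution equals $x_+$ or $x_-$ --- I would use the reduced (characteristic) identity of the algebra. Writing $x=x_0+Im(x)$ with $x_0=Re(x)$, the relations $x+\overline x=t(x)=2x_0$ and $x\overline x=\boldsymbol n(x)$ give $x^2=t(x)\,x-\boldsymbol n(x)$, hence $x^2-2x_0x+\boldsymbol n(x)=0$ for every $x\in\mathbb{H}(\alpha,\beta)$. Subtracting this from $x^2+bx+c=0$ cancels the quadratic term and leaves
\[
(b+2x_0)\,x=\boldsymbol n(x)-c ,
\]
a relation whose right-hand side is a real scalar. If $b+2x_0\neq 0$ we may divide, concluding $x\in\mathbf R$, and then the real quadratic formula (applicable because $b^2\ge 4c$) gives $x\in\{x_+,x_-\}$. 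If $b+2x_0=0$, then simultaneously $\boldsymbol n(x)=c$; with $x_0=-b/2$ this reads $-\alpha x_2^2-\beta x_3^2+\alpha\beta x_4^2 = c-\tfrac{b^2}{4}\le 0$. In the division-algebra regime of Theorem \ref{t23}, where $\alpha,\beta<0$, the left side is a sum of nonnegative terms, so it must vanish together with $c-b^2/4$; this forces $x_2=x_3=x_4=0$ and $b^2=4c$, i.e.\ $x=-b/2=x_{\pm}$ once more.

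The crux, and the place the write-up must be careful, is precisely this final alternative: the implication ``$b+2x_0=0\Rightarrow x\in\mathbf R$'' rests on the norm form being anisotropic --- in fact positive definite on the imaginary part --- which is exactly the hypothesis $\alpha,\beta<0$ that makes $\mathbb{H}(\alpha,\beta)$ a division algebra. In a genuinely split algebra the vanishing-norm locus is nontrivial and the equation acquires extra non-real roots, so I would state explicitly that the corollary (like Theorem \ref{t23} from which it is drawn) is understood over a division algebra $\mathbb{H}(\alpha,\beta)$. Granting that, the two cases above are exhaustive and the corollary follows.
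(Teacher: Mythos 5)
Your argument is correct, but it goes well beyond what the paper does: the paper offers no proof of this corollary at all --- it is simply a verbatim restatement of Case~2 of Theorem~\ref{t23}, which is itself quoted from [HS;02] without proof. Your version supplies the two halves that the restatement leaves implicit. Existence is handled exactly as one would expect (real scalars are central, so the classical formula verifies $x_\pm^2+bx_\pm+c=0$). The real content is your uniqueness argument: subtracting the characteristic identity $x^2-t(x)x+\boldsymbol n(x)=0$ from $x^2+bx+c=0$ to get the linear relation $(b+2x_0)x=\boldsymbol n(x)-c$, dividing when $b+2x_0\neq 0$ to force $x\in\mathbf R$, and disposing of the case $b+2x_0=0$ by the positive definiteness of the norm on the imaginary part when $\alpha,\beta<0$, which pins down $x=-b/2$ and $b^2=4c$. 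This buys something the paper's treatment does not: a self-contained proof valid in every division algebra $\mathbb{H}(\alpha,\beta)$ (not only $\alpha=\beta=-1$ as in [HS;02]), together with an explicit identification of where anisotropy of the norm is used --- your closing remark that the statement fails as an exhaustive description of the roots in a split algebra is accurate and worth keeping, since the paper states the corollary without that qualification and only fixes $\alpha,\beta<0$ informally in Section~4. The only cosmetic point is to define the coordinates $x=x_0+x_2e_1+x_3e_2+x_4e_3$ before using them; otherwise the write-up stands as is.
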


\begin{corollary}If $b$ and $c$ are the coefficients of the quadratic equation $x^2+bx+c=0$, such that $b \notin \mathbf{R}$, then the solution of the equation can be expressed as:\\
 \begin{equation*}
 x=\dfrac{(-Re(b))}{2}-(b^\prime+W)^{-1} (c^\prime-Y),
 \end{equation*}
  where $b^\prime=b-Re(b)=Im(b)$, $c^\prime=c-(Re(b)/2)(b-(Re(b))/2)$,\\
   and $(W,Y)$ are chosen such that:\\

$W=0,$ $Y=(A\pm \sqrt{A^2-4B})/2$ if $C=0$ and $A^2 \geq4B$\\
$W=\pm \sqrt{2\sqrt{B}-A},$ $Y=\sqrt{B}$ if $C=0$ and $A^2 < 4B$\\
$W=\pm \sqrt{z},$ $Y=(W^3+AW+C)/2W$ if $C \neq 0$ and $z$ is the unique positive solution of the equation
 $z^3+2Az^2+(A^2-4B)z-C^2=0,$ where $A=\vert b^\prime\vert^2+2Re(c^\prime),$ $B=\vert c^\prime\vert^2$ and $C=2Re(\overline{b^\prime}c^\prime)$.\end{corollary}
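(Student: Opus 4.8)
This corollary is exactly Case 4 of Theorem~\ref{t23} specialised to the hypothesis $b\notin\mathbf{R}$, so the plan is to give a self-contained derivation that reduces it to Lemma 2.2. First I would remove the linear term by the substitution $x=y-\tfrac{1}{2}Re(b)$; since $Re(b)$ is a real scalar it is central, so ordinary completion of the square is legitimate in spite of non-commutativity, and a short computation gives $x^{2}+bx+c=y^{2}+b'y+c'$ with $b'=Im(b)$ and $c'=c-\tfrac{1}{2}Re(b)\bigl(b-\tfrac{1}{2}Re(b)\bigr)$. Thus it suffices to solve $y^{2}+b'y+c'=0$ with $b'\neq 0$ purely imaginary.

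Next I would introduce the ansatz $(b'+W)y=Y-c'$ for real scalars $W$ and $Y\geq 0$, equivalently $y=-(b'+W)^{-1}(c'-Y)$, which makes sense because in the division algebra $b'+W\neq 0$ is always invertible, $n(b'+W)=W^{2}+n(b')>0$ (in a split algebra one must additionally assume $n(b'+W)\neq 0$, a caveat I would flag). Substituting $b'y=Y-c'-Wy$ into the depressed equation collapses it to $y^{2}-Wy+Y=0$. Writing $y=Re(y)+Im(y)$ and using $Im(y)^{2}=-n(Im(y))$, the imaginary part of this equation reads $(2Re(y)-W)\,Im(y)=0$, which (when $y\notin\mathbf{R}$, so that $Im(y)$ is not a zero divisor) forces $Re(y)=\tfrac{W}{2}$, while the real part gives $Re(y)^{2}-n(Im(y))-W\,Re(y)+Y=0$. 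In particular $W=2Re(y)$ and $Y=Re(y)^{2}+n(Im(y))\geq 0$, so conversely every root $y$ of the depressed equation is of the prescribed form for this real pair $(W,Y)$, which settles completeness.

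I would then feed these relations back into the identity $(b'+W)y=Y-c'$: taking real and imaginary parts of it expresses $W$ and $Y$ through the data, and eliminating $y$ yields exactly the system of Lemma 2.2,
\begin{equation*}
\begin{cases}
Y^{2}-(A+W^{2})Y+B=0\\
W^{3}+(A-2Y)W+C=0
\end{cases}
\end{equation*}
with $A=|b'|^{2}+2Re(c')$, $B=|c'|^{2}$, $C=2Re(\overline{b'}c')$. One checks that the hypotheses of Lemma 2.2 hold automatically: $B=|c'|^{2}\geq 0$, and if $A<0$ then $2Re(c')<-|b'|^{2}<0$, whence $A^{2}=|A|^{2}=\bigl(|b'|^{2}+2Re(c')\bigr)^{2}<\bigl(2|Re(c')|\bigr)^{2}\leq 4|c'|^{2}=4B$, the first inequality being strict precisely because $|b'|^{2}>0$. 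Applying Lemma 2.2 produces the three listed forms of $(W,Y)$, and substituting back through $y=-(b'+W)^{-1}(c'-Y)$ and $x=y-\tfrac{1}{2}Re(b)$ gives the asserted formula.

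The main obstacle is the non-commutative bookkeeping in the middle step: expanding $(b'+W)y$ and the product $b'y$ and matching real and imaginary parts carefully enough to see that the quantities governing the system are exactly $|b'|^{2}+2Re(c')$, $|c'|^{2}$ and $2Re(\overline{b'}c')$ rather than twisted variants of them. Here one repeatedly uses that $Re$ behaves as a trace form, $Re(pq)=Re(qp)$, together with $p\overline{p}=n(p)$ and $p^{2}=-n(p)$ for purely imaginary $p$. A secondary point worth stating explicitly is the split-algebra caveat noted above, since Theorem~\ref{t23} and Lemma 2.2 are phrased for the real division algebra $\mathbb{H}(-1,-1)$, where $n(b')>0$ guarantees both the invertibility of $b'+W$ and the positivity of $|b'|^{2}$ used in verifying the hypotheses of Lemma 2.2.
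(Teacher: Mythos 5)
Your proposal is correct, but it takes a different route from the paper in the sense that the paper offers no argument at all for this corollary: it is a verbatim restatement of Case 4 of Theorem~\ref{t23}, which is itself quoted from [HS;02], so the paper's implicit ``proof'' is just the citation. What you do is reconstruct the underlying Huang--So argument: complete the square using the centrality of $Re(b)$ to pass to $y^{2}+b'y+c'=0$, observe that any root also satisfies its characteristic equation $y^{2}-Wy+Y=0$ with $W=t(y)$, $Y=n(y)\geq 0$, subtract to get $(b'+W)y=Y-c'$, and then extract the two scalar constraints that are exactly the system of Lemma 2.2 with $A=|b'|^{2}+2Re(c')$, $B=|c'|^{2}$, $C=2Re(\overline{b'}c')$, after which the three listed forms of $(W,Y)$ follow from that lemma. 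This buys self-containedness, an explicit check of the lemma's hypotheses ($B\geq 0$, and $A<0\Rightarrow A^{2}<4B$ using $|b'|^{2}>0$ and $Re(c')^{2}\leq |c'|^{2}$), a completeness statement for the solution list, and the worthwhile caveat that invertibility of $b'+W$ is automatic only in the division algebra $\mathbb{H}(-1,-1)$ for which Theorem~\ref{t23} is stated; the paper's treatment buys only brevity. Two small refinements: the system of Lemma 2.2 is obtained by taking the \emph{norm} and the \emph{real part (trace)} of $(b'+W)y=Y-c'$ rather than literally its real and imaginary parts (the norm gives $Y^{2}-(A+W^{2})Y+B=0$, the real part combined with $Re(y)=W/2$ gives $W^{3}+(A-2Y)W+C=0$), and your completeness step that forces $Re(y)=W/2$ assumes $Im(y)\neq 0$; a possible real root $y$ is covered directly by the choice $W=2y$, $Y=y^{2}$, so no case is lost.
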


\section{The solutions of the second-degree equation in real quaternions}
	It is important to mention that the algebra $\mathbb{H}(\alpha,\beta)$ is a mathematical construction, and its properties can vary depending on the values chosen for $\alpha$ and $\beta$. When we take negative values for $\alpha$ and $\beta$ in the algebra $\mathbb{H}(\alpha,\beta)$), it becomes a division algebra. This means that every nonzero element in the algebra can be inverted. Multiplication and inversion of elements can be performed using the specific rules of this algebra.

	Therefore, for the algebra $\mathbb{H}(\alpha,\beta)$, we will take negative values for $\alpha$ and $\beta$, thus making it a division algebra, and the norm will be nonzero. If the values of $\alpha$ and $\beta$ are positive, we no longer have a division algebra because the norm is zero.

 	Next, we will describe the solution of a monic quadratic equation in the algebra of real quaternions. This statement provides an explicit formula for finding the solutions of the equation and explains how to perform the necessary calculations. It presents the general formula for the solution of the monic quadratic equation, where the equation's coefficients are represented as real quaternions, and the solution is a linear combination of the imaginary units of the quaternions. This formula is presented in a detailed manner, specifying the values of each component of the solution in terms of the coefficients and other terms involved in the equation.
\\
\begin{proposition}

Let $b=b_0+b_1 \cdot e_1+b_2 \cdot e_2+b_3 \cdot e_3$ and\\
 $c=c_0+c_1 \cdot e_1+c_2 \cdot e_2+c_3 \cdot e_3$  where $b, c$ are two quaternionic elements in $\mathbb{H}(\alpha,\beta)$ and knowing W and Y of the Theorem \ref{t23}
%
%
%
the solution of the second degree equation $x^2+bx+c=0$ is of the form
\begin{equation} \label{ecuatia 10}
 x=x_1+x_2 e_1+x_3 e_2+x_4 e_3,
\end{equation}
where:
$$ x_1=-t-[Wc_1-Y W- b_2 c_2 \alpha - b_3 c_3\beta +b_4 c_4 \alpha \beta -t(Wt-b_2^2 \alpha  - b_3^2 \beta + b_4^2 \alpha \beta )]/m;$$
$$ x_2=(Wc_2-b_2 c_1+b_2 Y+b_3 c_4 \beta  - b_4 c_3 \beta -tb_2 (W-t))/m;$$
$$ x_3=(Wc_3-b_2 c_4 \alpha - b_3 c_1+b_3 Y + b_4 c_2 \alpha-tb_3 (W-t))/m;$$
$$ x_4=(Wc_4-b_2 c_3+b_3 c_2 + b_4 c_1 + b_4 Y- tb_4 (W-t))/m;$$
and 
\begin{equation*}
 m = \sqrt{\frac{b^2-4c_0+\sqrt{(b^2-4c_0)^2+16(c_1^2+c_2^2+c_3^2)}}{2}}.
\end{equation*}
\end{proposition}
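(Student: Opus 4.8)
The plan is to reduce the statement to Case~4 of Theorem~\ref{t23} and then carry out the quaternion arithmetic in coordinates. By that theorem, since $b\notin\mathbf{R}$, a root of $x^{2}+bx+c=0$ is
\[
x=-\frac{\operatorname{Re}(b)}{2}-(b'+W)^{-1}(c'-Y),
\]
with $b'=\operatorname{Im}(b)$, $c'=c-\frac{\operatorname{Re}(b)}{2}\bigl(b-\frac{\operatorname{Re}(b)}{2}\bigr)$, and $(W,Y)$ the real pair supplied by Lemma~2.2. Writing $t=\operatorname{Re}(b)/2$ and $b'=b_{2}e_{1}+b_{3}e_{2}+b_{4}e_{3}$, the first step is to observe that $c'=c-tb+t^{2}$ and to expand it componentwise, so that its scalar part is $c_{1}-t^{2}$ and its $e_{i}$-components are $c_{i+1}-tb_{i+1}$.

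The key structural observation comes next. Because $W$ and $Y$ are \emph{real} scalars and because, in the table \eqref{eq1}, every anticommutator $e_{i}e_{j}+e_{j}e_{i}$ with $i\neq j$ vanishes, the square $(b')^{2}=\alpha b_{2}^{2}+\beta b_{3}^{2}-\alpha\beta b_{4}^{2}$ is a real number. Hence $b'+W$ has conjugate $W-b'$ and real norm $m=n(b'+W)=W^{2}-(b')^{2}$, which is nonzero in the division case $\alpha,\beta<0$; therefore $(b'+W)^{-1}=(W-b')/m$ and
\[
x=-t-\frac{(W-b')(c'-Y)}{m}.
\]

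The main computation is then to multiply out $(W-b')(c'-Y)=Wc'-b'c'-WY+b'Y$ using the multiplication rule of \eqref{eq1}, collect the coefficients of $1,e_{1},e_{2},e_{3}$, substitute the componentwise form of $c'$ obtained above, divide the product by $m$, negate, and add $-t$ to the scalar part. Reading off the four real coefficients then gives the stated $x_{1},x_{2},x_{3},x_{4}$. For instance, the real part of $(W-b')(c'-Y)$ equals $Wc_{1}-WY-\alpha b_{2}c_{2}-\beta b_{3}c_{3}+\alpha\beta b_{4}c_{4}$ before the shift, and replacing $c'$ by $c-tb+t^{2}$ contributes the correction $-t\bigl(Wt-\alpha b_{2}^{2}-\beta b_{3}^{2}+\alpha\beta b_{4}^{2}\bigr)$; together these reproduce the bracket in the formula for $x_{1}$, and the coefficients of $e_{1},e_{2},e_{3}$ are handled in exactly the same way.

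I expect the main obstacle to be purely the bookkeeping in that last expansion: one must track the signs and the $\alpha,\beta$-factors produced by $e_{1}^{2}=\alpha$, $e_{2}^{2}=\beta$, $e_{3}^{2}=-\alpha\beta$ together with the off-diagonal products $e_{1}e_{2}=e_{3}$, $e_{2}e_{1}=-e_{3}$, $e_{1}e_{3}=\alpha e_{2}$, $e_{3}e_{1}=-\alpha e_{2}$, $e_{2}e_{3}=-\beta e_{1}$, $e_{3}e_{2}=\beta e_{1}$, while simultaneously folding in the $-tb+t^{2}$ shift hidden inside $c'$. There is no conceptual difficulty: once \eqref{eq1} is applied faithfully the identity follows, so the argument is essentially a careful verification, with the branch of $(W,Y)$ used simply inherited from Theorem~\ref{t23}.
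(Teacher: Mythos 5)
Your proposal follows essentially the same route as the paper's own proof: reduce to Case 4 of Theorem \ref{t23}, expand $b'$ and $c'=c-tb+t^2$ componentwise (hence $A$, $B$, $C$), invert via the conjugate, $(b'+W)^{-1}=(W-b')/m$ with $m=W^{2}-\alpha b_2^{2}-\beta b_3^{2}+\alpha\beta b_4^{2}$, and multiply out $x=-t-(W-b')(c'-Y)/m$ using the table \eqref{eq1}; indeed you spell out the inversion step that the paper uses only implicitly. One small caveat: carrying out the expansion literally reproduces the stated $x_1$ exactly, but the $e_1,e_2,e_3$ coefficients come out with an overall minus sign relative to the displayed $x_2,x_3,x_4$ — this sign inconsistency is already present in the paper's own formulas (which the paper's proof simply asserts without showing the expansion), not a flaw introduced by your argument.
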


\begin{proof}
Let
$b=b_1+b_2 \cdot e_1+b_3 \cdot e_2+b_4 \cdot e_3$ and 
$c=c_1+c_2 \cdot e_1+c_3 \cdot e_2+c_4 \cdot e_3$.
\newline
for this case, the norm is: 
\begin{equation*}
\boldsymbol{n}\left( a\right)=a \overline{a} =a_{1}^{2}- \alpha a_{2}^{2} - \beta a_{3}^{2}+ \alpha \beta a_{4}^{2}.
\end{equation*}
\newline
We compute the necessary elements for applying the theorem:
$Re(b)=b_1.$\newline
Therefore, 
$$b^\prime=b-Re(b)=Im(b)=b_2 \cdot e_1+b_3 \cdot e_2+b_4 \cdot e_3 $$
and\\
$$c^\prime=c-(Re(b)/2)(b-(Re(b))/2)$$\\
$$=c_1+c_2 \cdot e_1+c_3 \cdot e_2+c_4 \cdot e_3-  \frac{b_1}{2}\left(b_1+b_2 \cdot e_1+b_3 \cdot e_2+b_4 \cdot e_3 - \frac{b_1}{2} \right) $$\\
$$=\left(c_1-\frac{b^2_1}{2}+\frac{b^2_1}{4}\right)+\left(c_2-\frac{b_1b_2}{2}\right)e_1+\left(c_3-\frac{b_1b_3}{2}\right)e_2+\left(c_4-\frac{b_1b_4}{2}\right)e_3$$\\
$$=\left(c_1-\frac{b^2_1}{4}\right)+\left(c_2-\frac{b_1b_2}{2}\right)e_1+\left(c_3-\frac{b_1b_3}{2}\right)e_2+\left(c_4-\frac{b_1b_4}{2}\right)e_3.$$\\
Using all the above and $C=2Re(\overline{b^\prime}c^\prime)$, we find
$$C=2Re((-b_2 \cdot e_1-b_3 \cdot e_2-b_4 \cdot e_3 ) \cdot ((c_1-\frac{b^2_1}{4})+(c_2-\frac{b_1b_2}{2})e_1+(c_3-\frac{b_1b_3}{2})e_2+(c_4-\frac{b_1b_4}{2})e_3)).$$
The real part is obtained only by multiplying terms of the same kind, therefore we obtain:\\
$$C=-2b_2c_2 \alpha + b_1b_2^2 \alpha - 2b_3c_3 \beta + b_1b_3^2 \beta+2b_4c_4 \alpha\beta-b_1b_4^2\alpha\beta$$
\\
and $A=\vert b^\prime\vert^2+2Re(c^\prime)=(-\alpha b^2_2 - \beta b^3_2 + \alpha \beta b^2_4) +2 \left( c_1 -\frac{b^2_1}{4}\right)$ 

Then $A= -\alpha b^2_2 - \beta b^3_2 + \alpha \beta b^2_4 +2 c_1 -\frac{b^2_1}{2}$

Computing $B=\vert c^\prime\vert^2$  we get
$$B=\left(c_1 - \frac{b^2_1}{4} \right)^2 - \alpha \left(c_2 - \frac{b_1 b_2}{2} \right)^2 - \beta  \left(c_3 - \frac{b_1 b_3}{2} \right)^2 + \alpha \beta \left(c_4 - \frac{b_1 b_4}{2} \right)^2$$
We denote $\frac{b_1}{2} = t$
and obtain:
$$B=(c_1 - t^2 )^2 - \alpha (c_2 - t b_2 )^2 - \beta  (c_3 - t b_3  )^2 + \alpha \beta (c_4 - t b_4 )^2$$

We compute $W$ and $Y$ according to the cases of the theorem. \\
By denoting $m=|b^\prime +W|=W^2- \alpha b_2^2- \beta b_3^2+\alpha \beta_4^2$
and cu $t=b_1/2$, we apply equation  \eqref{ecuatia 9}  and we find \\
$ x_1=-t-(Wc_1-Y W- b_2 c_2 \alpha - b_3 c_3\beta +b_4 c_4 \alpha \beta -t(Wt-b_2^2 \alpha  - b_3^2 \beta + b_4^2 \alpha \beta ))/m;$\\

$ x_2=(Wc_2-b_2 c_1+b_2 Y+b_3 c_4 \beta  - b_4 c_3 \beta -tb_2 (W-t))/m;$\\

$ x_3=(Wc_3-b_2 c_4 \alpha - b_3 c_1+b_3 Y + b_4 c_2 \alpha-tb_3 (W-t))/m;$\\

$ x_4=(Wc_4-b_2 c_3+b_3 c_2 + b_4 c_1 + b_4 Y- tb_4 (W-t))/m;$\\

We obtain the solution as
\begin{equation*}
 x=x_1+x_2 e_1+x_3 e_2+x_4 e_3.
\end{equation*}
\end{proof}

\section{Numerical applications and examples} 

	For the implementation of numerical applications, let's consider the general case of $\mathbb{H}(\alpha, \beta)$, 
$b=b_1+b_2 \cdot e_1+b_3 \cdot e_2+b_4 \cdot e_3$ and 
$c=c_1+c_2 \cdot e_1+c_3 \cdot e_2+c_4 \cdot e_3$.
\newline
Using Proposition 4.1, we present the algorithm from the table
 \ref{t1}.
The algorithm described has been implemented in Scilab 6.1.1.
To verify our computations, we apply all the formulas, on some remarkable examples.
\medskip\ \vspace{1mm}
\begin{table}[h!]
\begin{tabular}{c|cc}
$Steps $ & \\ \hline
$1.$ &   & Input $\alpha, \beta, b, c$  \\ 
$ $ &   &  \\ 
$2.$ &   &  Compute C, A, B   \\
$ $ &   &  \\  
$3.$ &   & Identify case \\ 
$ $ &   &  \\ 
$4.$ & If case 1:    &  Compute $W=0$,   \\ 
$ $ &$C=0,  A\geq 4B$   & $Y=(A\pm \sqrt{A^2-4B})/2$ \\ 
$ $ &   &  \\ 
$ $ &  If case 2:     &  Compute $W=\pm \sqrt{2\sqrt{B}-A}$, \\
$ $ & $C=0,  A^2 < 4B$  & $Y=\sqrt{B}$   \\ 
$ $ &   &  \\ 
$ $ &  If case 3:   &  Solve the polynomial equations  \\

$ $ &  $C\neq 0$  & $z^3+2Az^2+(A^2-4B)z-C^2=0$  \\ 
$ $ &   & and find the positive root. \\ 
$$  &  &  \\
$$  &  &  \\
$5.$  &  &  Compute solutions using formula \eqref{ecuatia 10}. \\

\medskip
\end{tabular}
\caption{Algorithm for computing the solutions of the quadratic equation.}
\label{t1}
\end{table}

\example ([HS;02], Example 2.12) Consider the quadratic equation $x^2+x e_1 + (1+ e_2)=0,$ i.e.$, b=e_1$ and $ c=1+ e_2.$ This belongs to Case 4 in Theorem 3.3 . Then $b^\prime=e_1$ and $c^\prime=1+ e_2.$ Moreover, $A=3,$  $B=2$, $C=0.$ It is Subcase 1 in Case 4. Hence, $W=0$ and $Y=2$ or $Y=1$. Consequently, the two solutions are $ x_1= - e_1+ e_3$ and  $ x_2 = e_3.$\\

For $\alpha=-1, \beta=-1$, the solution is:\\
$ C=0.000000$\\
$ A=3.000000$\\
$ B=2.000000$\\
$ Y_1=2.000000$\\
$ Y_2=1.000000$\\
$ x_1=-0.000000-1.000000 e_1-0.000000 e_2 + 1.000000 e_3$\\
$ x_2=-0.000000-0.000000 e_1-0.000000 e_2 + 1.000000 e_3.$\\

\example ([HS;02], Example 2.13) Consider the quadratic equation $x^2+x e_1 + e_2=0,$ i.e., $b=e_1$ and $c= e_2.$ This belongs to Case 4 in Theorem 3.3 . Then $b^\prime=e_1$ and $c^\prime= e_2.$ Moreover, $A=1$, $ B=1, $ $C=0.$ It is Subcase 2 in Case 4. Hence, $W=+1$ or $-1$ and $Y= 1.$ Consequently, the two solutions are $ x_1= (e_1+1)^{-1} (1- e_2)=(1/2) (1- e_1- e_2+ e_3)$ and $x_2=(e_1-1)^{-1} (1- e_2)=(1/2) (-1- e_1+ e_2+ e_3).$ \\

For $\alpha=-1, \beta=-1$, the solution of the program:\\
$ C=0.000000$\\
$ A=1.000000$\\
$ B=1.000000$\\
$ x_1=0.500000-0.500000 e_1 - 0.500000 e_2 + 0.500000 e_3$ \\
$ x_2=-0.500000-0.500000 e_1+0.500000 e_2 + 0.500000 e_3.$\\

\example ([HS; 02], Example 2.14) Consider the quadratic equation $x^2+x e_1 + (1+e_1+ e_2)=0,$ i.e., $b=e_1$ and $ c=1+e_1+ e_2.$ This belongs to Case 4 in Theorem 3.3 . Then $b^\prime=e_1$ and $ c^\prime=1+e_1+ e_2.$ Moreover, $ A=3$,  $B=3,$ $C=2.$ It is Subcase 3 in Case 4. Now the unique positive roots of $z^3+6 z^2-3z-4$ is 1, and hence,$ W=1$ and $Y=3$ or $W=-1$ and $Y=1.$ Consequently, the two solutions are $ x_1=(1/2) (1-3 e_1- e_2+ e_3)$ and $ x_2=(1/2) (-1+ e_1+ e_2+ e_3).$\\

For $\alpha=-1, \beta=-1$, the solution of the program:\\
$ C=2.000000$\\
$ A=3.000000$\\
$ B=3.000000$\\
$ x_1=0.500000-1.500000 e_1-0.500000 e_2 + 0.500000 e_3$\\
$ x_2=-0.500000+0.500000 e_1+0.500000 e_2 + 0.500000 e_3.$\\

The results obtained in Examples 5.1-5.3 are exactly the ones obtain by direct computation by the authors in [HS;02].

In the following, we will present a few examples using the results presented above and also calculate the solutions of the equations using the described algorithm, for different values of $\alpha$ and $\beta$.

\example Next, we aim to find the solution of the equation $x^2 + b x + c = 0$ in the case where $b$ and $c$ are quaternions:\\
$b=5\cdot 1 +6 \cdot e_1+7 \cdot e_2+8 \cdot e_3$
\newline
$c=2 \cdot 1+3 \cdot e_1+4 \cdot e_2+5 \cdot e_3$

For  $\alpha=-1, \beta=-1$, we can compute
$b' = b - Re(b) = 6e_1 + 7e_2 + 8e_3$
and 
$c' = c - \frac{1}{2}Re(b)(b - \frac{1}{2}Re(b))= (2 -  \frac{25}{2} + \frac{25}{4})1 + (3 - 15) e_1 + (4 - \frac{35}{2}) e_2 + (5 - 20) e_3$ \\
$ = -\frac{17}{4} -12 e_1 - \frac{27}{2} e_2 - 15 e_3$

$A = |b'|^2 + 2 Re(c') = 6^2 + 7^2 + 8^2 + 2(-\frac{17}{4}) = 140,5$

$B = |c'|^2 = (\frac{-17}{4})^2 + 12^2 + (\frac{27}{2})^2 + (15)^2 = 569,3125$

$C = 2 Re(\overline{b'}c') = -573$

We can check that $A^2 \geq 4B$, so we can use case 4. Using the formulas in case 4, the next step is to find the values of $(W,Y)$ using one of the three situations described in the formula from case 4. Since $C \neq 0$, we will use situation 3:

$z^3 + 2Az^2 + (A^2 - 4B)z - C^2 = 0$

To find the unique positive solution $z$, we will use the Newton-Raphson method. In this case, we have:

$f(z) = z^3 + 2Az^2 + (A^2 - 4B)z - C^2$

$f'(z) = 3z^2 + 4Az + (A^2 - 4B)$

The analytical method to find the solutions of the equation is given by choosing $z_0=1$ and applying the Newton-Raphson formula. We can obtain successive values for z as the fixed number given by:
\newline
$z_1 = z_0 - \frac{f(z_0)}{f'(z_0)} = 1 - \frac{f(1)}{f'(1)}; $

$z_2 = z_1 - \frac{f(z_1)}{f'(z_1)}; $

$z_3 = z_2 - \frac{f(z_2)}{f'(z_2)}; $

$z_4 = z_3 - \frac{f(z_3)}{f'(z_3)}.$\\

Computing by this formula we use decimal fractions with many decimals, therefore we used the Scilab solver:

$$p = -328329 + 17463x + 281x^2 + x^3$$

By using of the solver in Scilab, we obtain: $W_1 =\pm 3.871934,$ and using a numerical application, we obtain:\\
$ C = -573.000000;$\\
$ A =  140.500000;$\\
$ B =  569.312500.$\\
$x_1=-0.564033 +0.008853 e_1+0.306465 e_2-0.017904 e_3$\\
$x_2=-4.435967 -5.972266 e_1-6.647896 e_2-7.945509 e_3.$\\
 
For  $\alpha=-2, \beta=-3$, the solution is\\
$C= -2295.000000$\\
$A= 594.500000$\\
$B= 2202.812500$\\
$W=\pm 3.813764$\\
$x_1=-0.593118 +0.012038 e_1+0.168839 e_2-0.004699 e_3$\\
$x_2=-4.406882 -5.982890 e_1-6.819067 e_2-7.985585 e_3$.\\

\example ([FZ, 22])

We aim to solve the following equation:\\
 $x^2 + (2+3e_1+4e_2+5e_3) x + (4-5e_1-6e_2-7e_3) = 0$

For $\alpha=-1, \beta=-1$, we write:

$(a + be_1 + c e_2 + d e_3)^2 + (2+3e_1+4e_2+5e_3)(a + be_1 + c e_2 + d e_3) + (4-5e_1-6e_2-7e_3) = 0$

We expand this equation and group the terms based on the quaternionic units:

$(a^2 - b^2 - c^2 - d^2 + 2a - 3b - 4c - 5d + 4) + (2ab + 3a + 2b - 5c + 4d -5)e_1 + (2ac + 4a + 5b + 2c -3d-6)e_2 + (2ad + 5a  -4b + 3c + 2d-7)e_3 = 0$

Thus, we can obtain a system of linear equations with 4 equations and 4 unknowns:\\
\begin{equation*}
\begin{cases}
a^2 - b^2 - c^2 - d^2 + 2a - 3b - 4c - 5d + 4 = 0\\
2ab + 3a + 2b - 5c + 4d -5 = 0\\
2ac + 4a + 5b + 2c -3d-6 = 0\\
2ad + 5a  -4b + 3c + 2d -7 = 0\\
\end{cases}
\end{equation*}

Solving this system of equations can provide us with the quaternionic solutions to the initial equation. Unfortunately, this system does not seem to have a simple and analytical solution, but we can try to solve it numerically or look for a specialized method for solving quaternionic equations.

Using the algorithm, we found the following results:\\
$ C=-248.000000$\\
$ A=56.000000$\\
$ B=317.000000$\\
$x_1=0.988335 +0.435138 e_1-0.199557 e_2+0.624407 e_3$\\
$x_2=-2.988335 -3.374360 e_1-5.198324 e_2-5.563629 e_3.$\\

For $\alpha=-2.35, \beta=-100$, the solution of the equations is \\
$x_1=1.416406 +0.030602 e_1-0.009466 e_2+0.006083 e_3$\\
$x_2=-3.416406 -2.977407 e_1-4.019286 e_2-5.005551 e_3$\\
Moreover, \\
$C= -36312.800000$\\
$A= 7502.150000$\\
$B= 43999.400000$.

\example Next, we aim to find the solution of the equation in the case where $b$ and $c$ are quaternions:
$$b=1.25 +0.2 e_1-0.31 e_2-0.69 e_3$$
and
$$c=-1 +0.56 e_1-2.35 e_2-4.56 e_2$$
Then, the equations is
$$x^2 +(1.25 +0.2 e_1-0.31 e_2-0.69 e_3) x-1 +0.56 e_1-2.35 e_2-4.56 e_2=0.$$

Using the program, for $\alpha=-1, \beta=-1$, we found the following results:\\
$C= 7.208550$\\
$A= -2.169050$\\
$B= 23.819054$\\
$W= \pm3.485216$\\
$x_1=1.117608 +-0.251329 e_1+0.667362 e_2+1.505501 e_3$\\
$x_2=-2.367608 +0.018740 e_1-0.560890 e_2-0.861963 e_3.$\\

For $\alpha=-6, \beta=-8.5$, the solution is\\
$C= 302.988862$\\
$A= 22.556700$\\
$B= 911.964612$\\
$W=\pm 7.155732$\\
$x_1=2.952866 -0.219073 e_1+0.340546 e_2+0.917961 e_3$\\
$x_2=-4.202866 -0.027102 e_1-0.234104 e_2-0.235706 e_3.$\\
 
\example Next, we aim to calculate by using of the program an example where $C=0$:

Find the solutions of the equation: $x^2 + (e_1+e_2+e_3) x + (-3e_1-4e_2+7e_3) = 0$.

We can see that $b=e_1+e_2+e_3 \notin \mathbb{R}$, so we need to use the formula from case 4.

Firstly, we will calculate the values of $b^\prime$, $c^\prime$, $A$, $B$, and $C$:

$b^\prime = b - \operatorname{Re}(b) =e_1+e_2+e_3$

$c^\prime = c - \frac{\operatorname{Re}(b)}{2}(b - \frac{\operatorname{Re}(b)}{2}) = -3e_1-4e_2+7e_3$

$A = |b^\prime|^2 + 2\operatorname{Re}(c^\prime) =3$

$B = |c^\prime|^2 = 74$

$C = 2\operatorname{Re}(\overline{b^\prime}c^\prime) = 0$

The next step is to find the values of $(W,Y)$ using one of the three situations described in the formula from case 4. Since $C=0$ and $A^2 < 4B$.
Now we can calculate $(W,Y)$:
$W=\pm \sqrt{2\sqrt{B}-A}=\pm 3,7689057476$ and
 $Y=\sqrt{B}=8,602325267$.

By using of the program, we have found the following results:\\
$ C=0.000000$\\
$ A=3.000000$\\
$ B=74.000000$\\
$ W=\pm 3.768906$\\
$ Y= 8.602325$\\
$ x_1=1.884453+0.796552 e_1+0.608748 e_2 -2.091566 e_3$\\
$ x_2=-1.884453-0.517828 e_1-1.143758 e_2 +0.975319 e_3.$\\

The same equation can be solved for $\alpha=-6$ and $\beta=-9$. In this case, $C\neq0$. We get\\
$C= 2088.000000$\\
$A= 528.000000$\\
$B= 2844.000000$\\
$W=\pm 3.919010$\\
$x_1=1.959505 -0.537980 e_1-1.973780 e_2-3.017625 e_3$\\
$x_2=-1.959505 +0.399290 e_1-0.070390 e_2+0.024986 e_3$.\\

\example Next, we intend to use the program to calculate an example where C=0:

Let's find the solutions of the equation: $x^2 + (e_1+e_2+e_3) x + (-e_1+e_3) = 0$.

We can see that $b=e_1+e_2+e_3 \notin \mathbb{R}$, so we need to use the formula from case 4.

Firstly, we will calculate the values of $b^\prime$, $c^\prime$, $A$, $B$ and $C$:

$b^\prime = b - \operatorname{Re}(b) =e_1+e_2+e_3$

$c^\prime = c - \frac{\operatorname{Re}(b)}{2}(b - \frac{\operatorname{Re}(b)}{2}) = -e_1+e_3$

$A = |b^\prime|^2 + 2\operatorname{Re}(c^\prime) =3$

$B = |c^\prime|^2 = 2$
$C = 2\operatorname{Re}(\overline{b^\prime}c^\prime) = 0$

The next step is to find the values of $(W,Y)$ using one of the three situations described in the formula of case 4. Since  $C=0$ and $A^2 \geq4B$, we will use situation 1:

$W=0,$ 
$Y=(A\pm \sqrt{A^2-4B})/2$  result $Y_1=2$,$Y_2= 1$ 

Calculating with the numerical application, we get:\\
$ C=0.000000$\\
$ A=3.000000$\\
$ B=2.000000$\\
$ Y_1=2.000000$\\
$ Y_2=1.000000$\\
$ x_1=-0.000000-0.333333 e_1-0.666667 e_2 - 0.333333 e_3$\\
$ x_2=-0.000000-0.000000 e_1-0.333333 e_2 - 0.000000 e_3.$\\

For $\alpha=-100, \beta=-100$, we get $C\neq0$, like in the other example, and the solution is\\
$C= 19800.000000$\\
$A= 10200.000000$\\
$B= 10100.000000$\\
$W=\pm 1.940836$\\
$x_1=0.970418 -0.989912 e_1-0.999903 e_2-0.999995 e_3$\\
$x_2=-0.970418 +0.009513 e_1-0.000097 e_2+0.000191 e_3.$\\

\example ([FZ; 22]) Let $f_n$ be the Fibonacci sequence define as $f_0=0, f_1=1$ and $f_k=f_{k-1}+f_{k-2}$. We define the quaternion $F_n=f_n+f_{n+1}e_1+f_{n+2}e_2+f_{n+3}e_3$.

Consider the monic quadratic equation $x^2+F_n x+F_m=0$. We use the same algorithm for solving the equation.

For $n=3,m=3$, case discussed in ([FZ; 22]),  we obtain
$$F_3=2 +3 e_1+5 e_2+8 e_3$$
and the equation 
$x^2 + (2 +3 e_1+5 e_2+8 e_3) x + (2 +3 e_1+5 e_2+8 e_3) = 0$.

Solving the equations for $\alpha=-1,\beta=-1$, and we get\\
$C= 0.000000$\\
$A= 100.000000$\\
$B= 1.000000$\\
$Y_1= 99.989999$\\
$Y_2= 0.010001$\\
$x_1=-1.000000 -3.030306 e_1-4.560714 e_2-8.080816 e_3$\\
$x_2=-1.000000 +0.030306 e_1+0.540306 e_2+0.080816 e_3.$\\

Solving the equations for $\alpha=-6.3,\beta=-5.25$, and we get\\
$C= 0.000000$\\
$A= 2306.750000$\\
$B= 1.000000$\\
$Y_1= 2306.749566$\\
$Y_2= 0.000434$\\
$x_1=-1.000000 +-3.001301 e_1-4.870961 e_2-8.003470 e_3$\\
$x_2=-1.000000 +0.001301 e_1+0.133376 e_2+0.003470 e_3.$\\

For $n=5, m=10$ we obtain 
$$F_5=5+8 e_1+13 e_2+21e_3,$$
$$F_{10}=55 +89 e_1+144 e_2+233 e_3.$$
and the equation $x^2+F_ 5x+F_{10}=0$.
Then, the solution for $\alpha=-1,\beta=-1$ found by the algorithm is\\
$C= 11584.000000$\\
$A= 771.500000$\\
$B= 52150.062500$\\
$W=\pm 13.722364$\\
$x_1=4.361182 -9.008123 e_1-10.308573 e_2-23.657396 e_3$\\
$x_2=-9.361182 +1.019720 e_1+5.966780 e_2+2.645800 e_3$.\\

For $\alpha=-6.3,\beta=-5.25$ the solution provided by the algorithm is\\
$C= -272916.525000$\\
$A= 15974.025000$\\
$B= 1175231.943750$\\
$W=\pm 16.934907$\\
$x_1=5.967453 -8.058866 e_1-11.642625 e_2-21.158442 e_3$\\
$x_2=-10.967453 +0.062114 e_1+1.552659 e_2+0.157823 e_3.$\\

\example Let $p_n$ be the Pell sequence define as $p_0=0, p_1=1$ and $p_k=2p_{k-1}+p_{k-2}$. Consider the quaternions $P_n=p_n+p_{n+1}e_1+p_{n+2}e_2+p_{n+3}e_3$. 
We solve the monic quadratic equation $x^2 +P_n x+P_m=0$.

For $n=3,m=3$, we get
$P_3=3 +7 e_1+17 e_2+41 e_3$ and the equation is $x^2 +(3 +7 e_1+17 e_2+41 e_3)x+3 +7 e_1+17 e_2+41 e_3=0$. 

Solving the equations for $\alpha=-1,\beta=-1$ using the algorithm we obtain\\
$C= -2019.000000$\\
$A= 2020.500000$\\
$B= 505.312500$\\
$W= \pm 0.999011$\\
$x_1=-1.000494 +0.003464 e_1+0.292570 e_2+0.020287 e_3$\\
$x_2=-1.999506 -7.003464 e_1-16.724253 e_2-41.020287 e_3$.\\

For $\alpha=-7,\beta=-6$ the solutions are\\
$C=-72679.000000$\\
$A= 72680.500000$\\
$B= 18170.312500$\\
$W=\pm 0.999972$\\
$x_1=-1.000014 +0.000096 e_1+0.055517 e_2+0.000564 e_3$\\
$x_2=-1.999986 -7.000096 e_1-16.944950 e_2-41.000564 e_3.$\\

For $n=12,m=19$, the quaternions are
$P_{12}=8119 +19601 e_1+47321e_2+114243 e_3$ and 
$P_{19}=3880899 +9369319 e_1+22619537 e_2+54608393 e_3$.
The equations is $x^2 +P_{12} x+P_{19}=0$.
Solving for $\alpha=-1,\beta=-1$, we get\\
$C= -112279524556439.000000$\\
$A= 15649742008.500000$\\
$B= 201223166914529952.000000$\\
$W=\pm 7162.787683$\\
$x_1=-478.106158 +0.284778 e_1+136.813987 e_2+1.659808 e_3$\\
$x_2=-7640.893842 -19601.284778 e_1-47185.561043 e_2-114244.659808 e_3.$\\

For $\alpha=-7,\beta=-6$ the solutions are\\
$C=-4041981872234103.000000$\\
$A= 564261307428.500000$\\
$B= 7238333535486963712.000000$\\
$W=\pm 7162.990016$\\
$x_1=-478.004992 +0.007936 e_1+26.572949 e_2+0.046254 e_3$\\
$x_2=-7640.995008 -19601.007936 e_1-47294.465369 e_2-114243.046254 e_3.$

\example Consider now the Lucas number sequences define as $l_0=2, l_1=1$ and $l_n=l_{n-1}+l_{n-2}$. We define the quaternion $L_n=l_n+l_{n+1}e_1+l_{n+2}e_2+l_{n+3}e_3$. We solve the monic quadratic equation $x^2 +L_n x+L_m=0$.

For $n=3, m=8$, the quaternions are
$L_3=4 +7 e_1+11 e_2+18 e_3$ and
$L_8=47 +76 e_1+123 e_2+199 e_3$.

Solving the equation $x^2 +L_3 x+L_8=0$ for $\alpha=-1,\beta=-1$, we get\\
$C= 8958.000000$\\
$A= 580.000000$\\
$B= 42463.000000$\\
$W=\pm 13.777285$\\
$x_1=4.888642 -8.113676 e_1-8.726917 e_2-20.805123 e_3$\\
$x_2=-8.888642 +1.040556 e_1+5.802217 e_2+2.878243 e_3$.\\

On the other hand, for $\alpha=-3, \beta=-10$, the solution are\\
$C=200864.000000$\\
$A= 11163.000000$\\
$B= 912461.000000$\\
$W=\pm 17.747518$\\
$x_1=6.873759 -7.095766 e_1-10.394477 e_2-18.193193 e_3$\\
$x_2=-10.873759 +0.051875 e_1+0.848658 e_2+0.197582 e_3.$\\

For $n=11, m=14$, $L_{11}=199 +322e_1+521e_2+843e_3$ and 
$L_{14}=843+1364e_1+2207e_2+3571e_3$. 

Solving the equation $x^2 +L_{11} x+L_{14}=0$ for $\alpha=-1, \beta=-1$, we get\\
$C= -4638388100.000000$\\
$A= 24326817.500000$\\
$B= 221017587902.562500$\\
$W=\pm 190.527592$\\
$x_1=-4.236204 +0.000233 e_1+0.283353 e_2+0.000622 e_3$\\
$x_2=-194.763796 -322.000241 e_1 -520.717414 e_2 -843.000621 e_3.$\\

Finally, we solve the same equation for $\alpha=-1.236, \beta=-10.023$, the solution are\\
$C= -2220150838.889460$\\
$A= 11634516.036772$\\
$B= 105832184609.751312$\\
$W=\pm 190.527281$\\
$x_1=-4.236359 +0.000487 e_1+0.243975 e_2+0.001297 e_3$\\
$x_2=-194.763641 -322.000504 e_1-520.757626 e_2 -843.001295 e_3.$\\

\subsection*{Conclusion}

	In this article, we have provided an algorithm in Scilab which allows us to find solutions for the monic quadratic equation $x^2+bx+c=0$, with $b,c \in 
\mathbb{H}(\alpha, \beta)$.

	In Theorem \ref{t23}, the authors offer solutions for all cases of the monic equation $x^2+bx+c=0$. We are interested only in cases $3$ and $4$ of the theorem.
The article presents several equations solved using the algorithm, implemented in Scilab.

	By assigning specific values to the two quaternions, b and c, in the form of $b = b_1 + b_2e_1 + b_3e_2 + b_4e_3$ and $c = c_1 + c_2e_1 + c_3e_2 + c_4e_3$, and utilizing the formulas provided in the article, we perform the following calculations:
Compute the values of $A, B$, and $C$:
A is determined by evaluating the expression $A = |b'|^2 + 2Re(c')$,
where $b' = b - Re(b)$ and $c' = c - (Re(b)/2)(b - (Re(b))/2)$.
$B$ is computed as $B = |c'|^2$.
$C$ is obtained by calculating $C=2Re(\overline{b^\prime}c^\prime)$.
Identify the case we are in, based on the four cases specified in the theorem.
Proceeding with the determined case, we find the two solutions of the monic quadratic equation, $x^2 + bx + c = 0$, using the appropriate formulas presented in the article.
	That this detailed procedure allows us to obtain precise and accurate solutions for the given quadratic equation in the context of the algebra of real quaternions.

	The algorithm can solve monic quadratic equations for any base that respects the multiplication table of quaternions.

\subsection*{Acknowledgment}
We would like to thank the reviewers for their constructive feedback, expertise, and valuable comments that have significantly improved this article.

Geanina ZAHARIA,\\
PhD student at Doctoral School of Mathematics,\\
Ovidius University of Constanta, Romania,\\
geanina.zaharia@365.univ-ovidius.ro\\
geaninazaharia@yahoo.com\\

Diana-Rodica MUNTEANU\\
Ovidius University of Constanța,\\ 
Faculty of Psychology and
Educational Sciences\\
diana\_munteanu@365.univ-ovidius.ro\\ diana.rodica.merlusca@gmail.com\\

\end{document}